\newtheorem{lem}{Lemma}
\newtheorem{theo}{Theorem}
\newcommand{\Z}{\mathbb{Z}}
\title{Monodromy Groups of Dessins d'Enfant on Rational Triangular Billiards Surfaces}
\author{
Madison Mabe\\
Lee University\\
\texttt{mmabe000@leeu.edu}

\and 
Richard A. Moy\\
Lee University\\
\texttt{rmoy@leeuniversity.edu}\\

\and

Jason Schmurr\\
Lee University\\
\texttt{jschmurr@leeuniversity.edu}\\

\and

Japheth Varlack\\
Lee University\\
\texttt{jvarla01@leeu.edu}
}
\begin{document}

\maketitle

\begin{abstract}
A \emph{dessin d’enfant}, or \emph{dessin}, is a bicolored graph embedded into a Riemann surface, and the monodromy group is an algebraic invariant of the dessin generated by rotations of edges about black and white vertices. A \emph{rational billiards surface} is a two dimensional surface that allows one to view the path of a billiards ball as a continuous path. In this paper, we classify the monodromy groups of dessins associated to rational triangular billiards surfaces.

\end{abstract}
%%%%%%%%%%%%%%%%%%%%%%%%%%%%%%%%%%%%%%%%%%%%%%%%%%%%%%%%%%%%%%%%%%%%%%%%%%%%%%%%%%%%%%%%%%%%%%%%%%%%%%%%%%%%%%%%%%%%%%%%%%%%%%%%%%%%%
\section*{Introduction}

A rational billiards surface is a two dimensional surface that allows one to view the path of a billiards ball as a continuous path instead of a jagged path obtained from numerous bounces off the sides of a billiards table. As one changes the shape of the billiards table, one obtains different billiards surfaces. In \cite{SMG}, the authors studied the Cayley graph associated to billiards surfaces obtained from rational triangular billiards tables. In this project, we propose modifying their approach by classifying the monodromy groups of {\it dessins d'enfant} drawn on these billiards surfaces. In particular, we prove these groups are semi-direct products of abelian groups.

%%%%%%%%%%%%%%%%%%%%%%%%%%%%%%%%%%%%%%%%%%%%%%%%%%%%%%%%%%%%%%%%%%%%%%%%%%%%%%%%%%%%%%%%%%%%%%%%%%%%%%%%%%%%%%%%%%%%%%%%%%%%%%%%%%%%%
\section*{Background}

\subsection*{Billiard Surfaces}

A basic definition of a billiards surface is a Riemann surface constructed from a polygon with angles in radians that are rational multiples of $\pi$ \cite{MT}.
A billiards surface is a topological construction that allows one to view the path of a billiards ball as a continuous path on a surface instead of a chaotic path of bounces off the sides of the billiards table. This concept is best illustrated via a picture involving the billiards surface associated with the rectangle. One constructs the billiards surface by beginning with one copy of the rectangle and then reflecting it a certain number of times as pictured in Figure \ref{fig:unfolding}. This is referred to as unfolding the path.
\begin{figure}[!h]
\centering
    \includegraphics[width=0.75\linewidth]{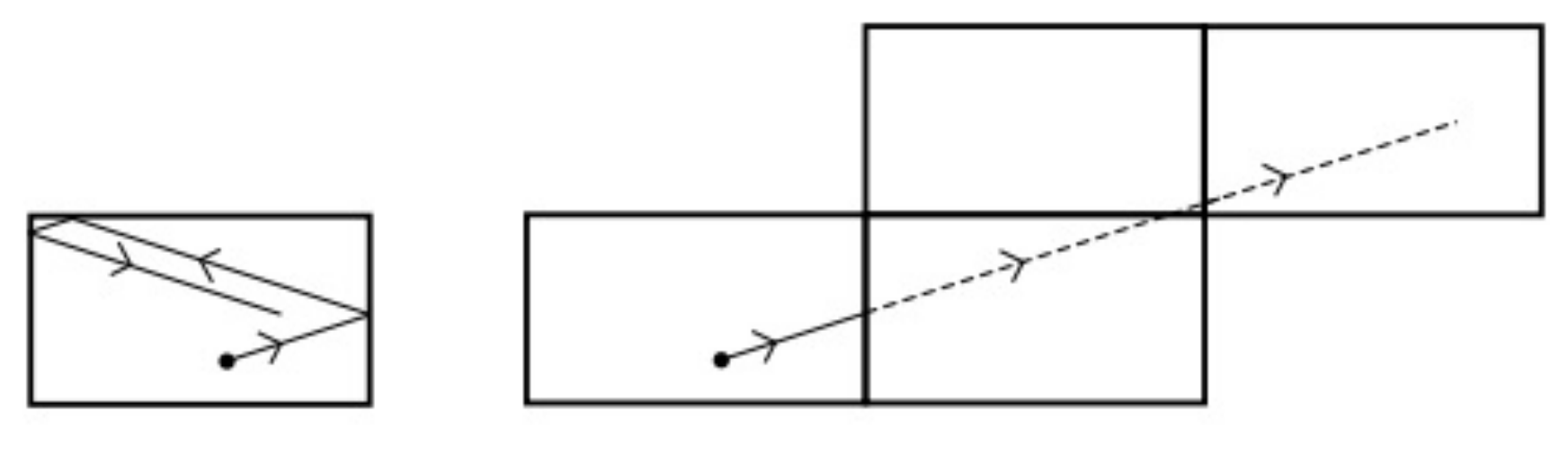}
    \caption{Unfolding a Billiard Path}
    \label{fig:unfolding}
\end{figure}
Once one has enough copies of the initial polygon, one identifies edges in the surface that are parallel and have the same orientation. In essence, this amounts to gluing together pairs of sides of our diagram. Figure \ref{fig:identify} is an example of how edges are identified in the billiards surface for the rectangle. 

%\begin{figure}
\begin{figure}[!h]
\begin{minipage}{0.6\textwidth}
\centering
\includegraphics[width=\linewidth]{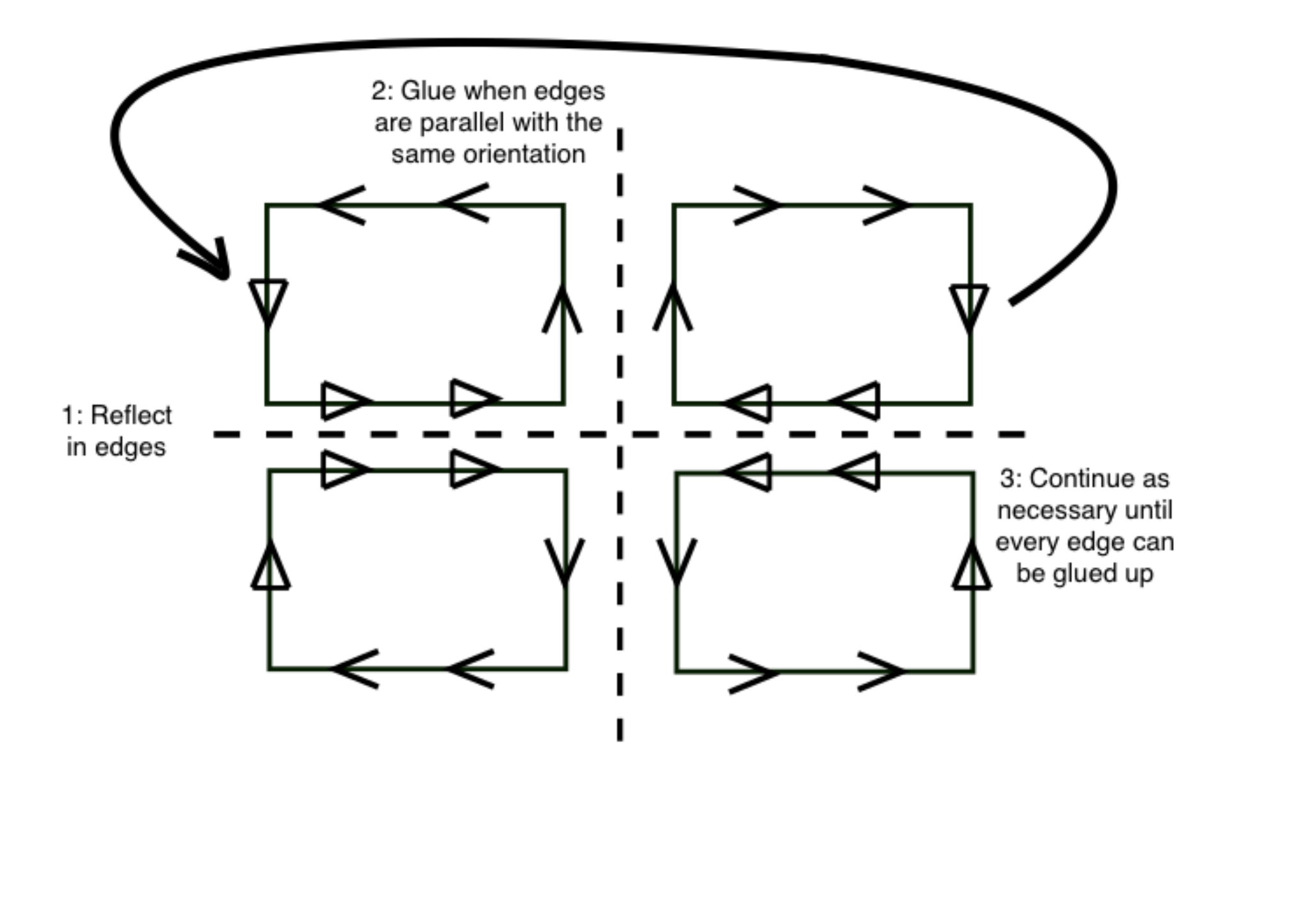}
\end{minipage}
\begin{minipage}{0.4\textwidth}
\centering
\includegraphics[width=\linewidth]{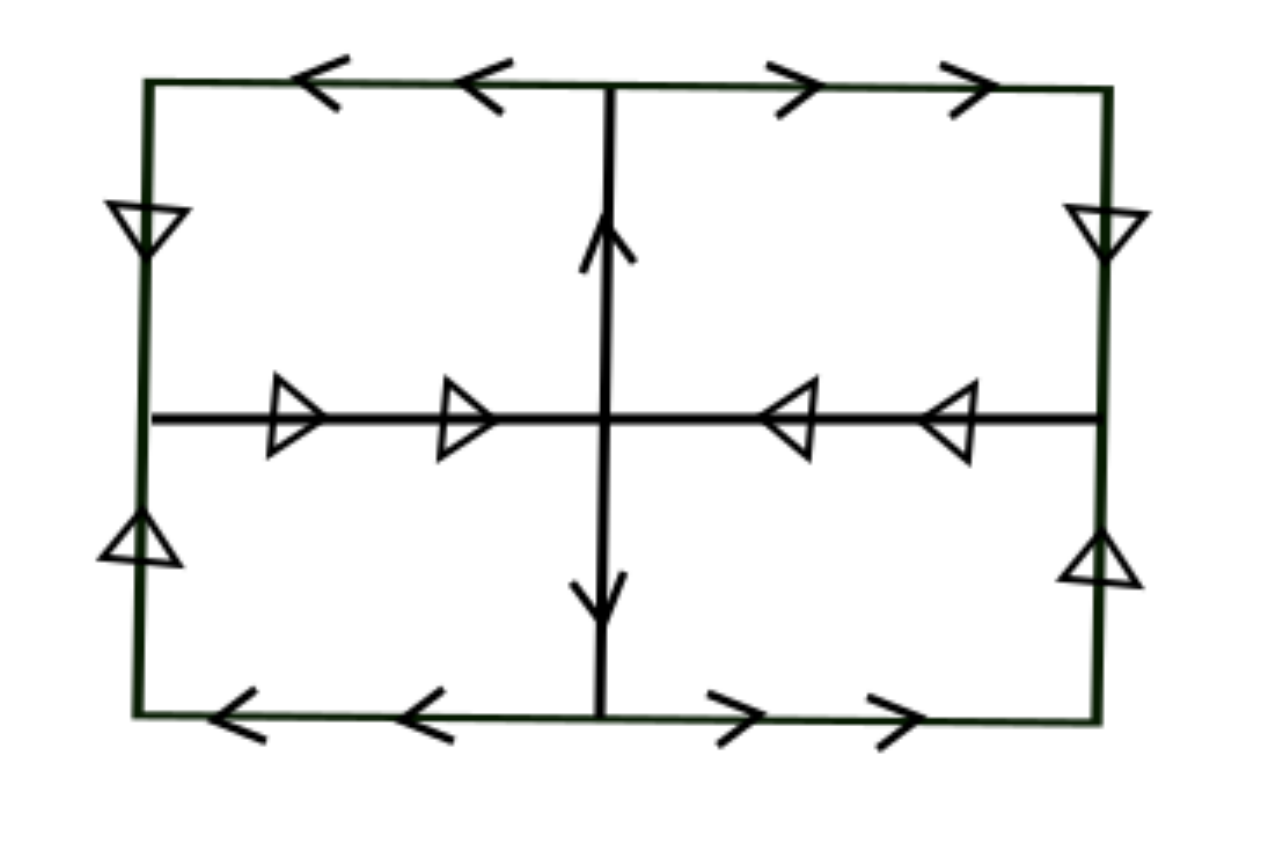}
\end{minipage}
\caption{Identifying Sides of the Billiards Diagram}
\label{fig:identify}
\end{figure}
%\end{figure}

One obtains the following 1-torus after gluing together the opposite sides of the larger rectangle.

\begin{figure}[!h]
\centering
    \includegraphics[width=0.8\linewidth]{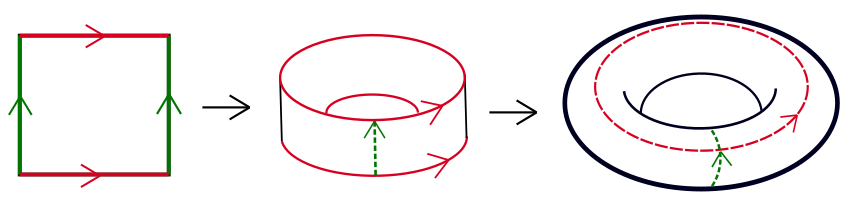}
    \caption{Gluing Sides of Billiards Diagram to Create Billiards Surface}
\end{figure}

In fact, all billiards surfaces are tori with one or more holes \cite{ZK}.

\subsection*{Graphs on Surfaces}

As studied in \cite{SMG}, one may construct a graph on the billiards surface. The Cayley graph of a billiards surface is the graph obtained by placing a vertex at the center of each polygon and edges are drawn between vertices if their corresponding polygons are adjacent. An example of the Cayley graph of the billiards surface corresponding to the $\frac{3\pi}{10},\frac{3\pi}{10},\frac{4\pi}{10}$ triangle is shown in Figure \ref{fig:cayley}.

\begin{figure}[!h]
\centering
    \includegraphics[width=0.6\linewidth]{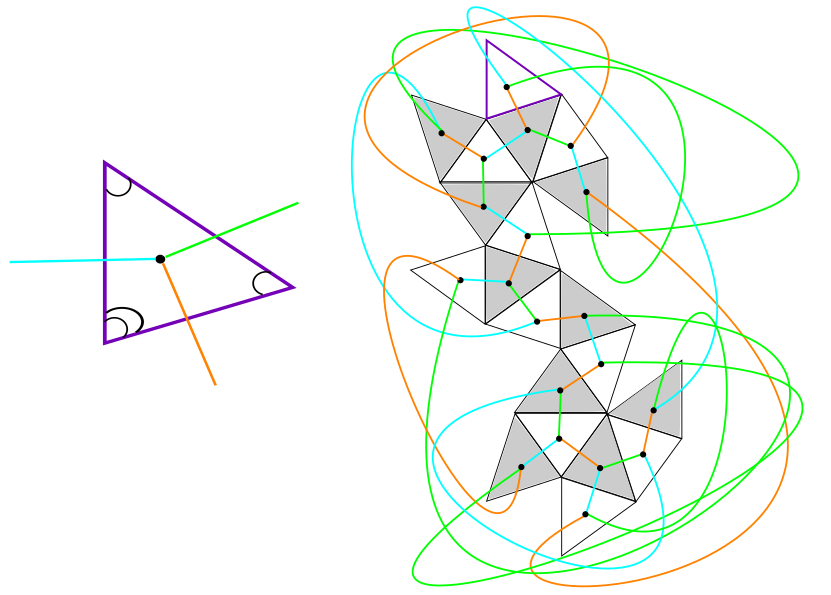}
    \caption{Cayley Graph on Billiards Surface of $\frac{3\pi}{10},\frac{3\pi}{10},\frac{4\pi}{10}$ Triangle}
    \label{fig:cayley}
\end{figure}

A billiards surface can have arbitrarily large genus. However, the genus of the Cayley graph of a triangular billiards surface is always zero or one \cite{SMG}!

%Just as one can study the genus of a billiards surface, one can also study the genus of a graph. The genus of a graph is the smallest non-negative integer, such that the graph can be drawn on a surface of genus $g$ (e.g. a $g$ holed torus) \cite{HR}. Although billiards surfaces of triangles can have arbitrarily large genera, the genus of the Cayley graph of a triangular billiards surface is always zero or one \cite{SMG}!

Although the Cayley graphs are an interesting object to study, they lose some of the geometric information of the original billiards surface as evidenced by the fact that the genus of the Cayley graph of a triangular billiards surface is at most one. In order to create a graph that preserves more of the structure of the billiards surface, we introduce the notion of a dessin d'enfant. 

%\subsection*{Dessins d'Enfant}

A {\it dessin d’enfant}, or simply {\it dessin}, is a connected bicolored (e.g. the vertices are one of two colors) graph equipped with a cyclic ordering of the edges (oriented counterclockwise) around each vertex \cite{LZ}. In the case of the Cayley graph, one can rearrange the order of the edges coming out of a vertex without changing the graph. However, this is not allowed in the case of dessins d'enfant. Dessins d'enfant, popularized by Grothendieck in his 1984 {\it Esquisse d'un Programme}, have been studied a great deal, because they lie at the interface of several areas of mathematics including graphs and algebraic curves. %This additional structure is part of the reason why dessins d'enfant are related to so many areas of mathematics such as number theory, algebraic geometry, combinatorics, and topology.

One important algebraic invariant of a dessin d'enfant is its monodromy group. If we have a dessin and we label the $n$ edges with the numbers $1,2,\dots, n$, we can associate the dessin with a pair of permutations $\sigma_0,\sigma_1\in S_n$, the symmetric group such that the cycles of $\sigma_0$ correspond to the cyclic ordering (read counterclockwise) of the edges around the black vertices and the cycles of $\sigma_1$  correspond to the ordering (read counterclockwise) of the edges around the white vertices. The {\it monodromy group} of a dessin with $n$ edges is $\langle \sigma_0,\sigma_1\rangle$, the group generated by $\sigma_0,\sigma_1\in S_n$.

For example, see the dessin in Figure \ref{fig:monodromy}, where we have a bicolored graph whose edges are labeled $1,2,\dots, 9$ inducing a pair of permutations $\sigma_0=(1,2,3)(4,9,8)(5,6,7),\sigma_1=(3,4,5)(1,9,6)(2,8,7)\in S_9$ associated with the black and white vertices, respectively. The monodromy group of this dessin is isomorphic to $C_3\times C_3$, where $C_3$ is the cyclic group of size $3$.

\begin{figure}[!h]
\centering
    \includegraphics[width=0.4\linewidth]{final.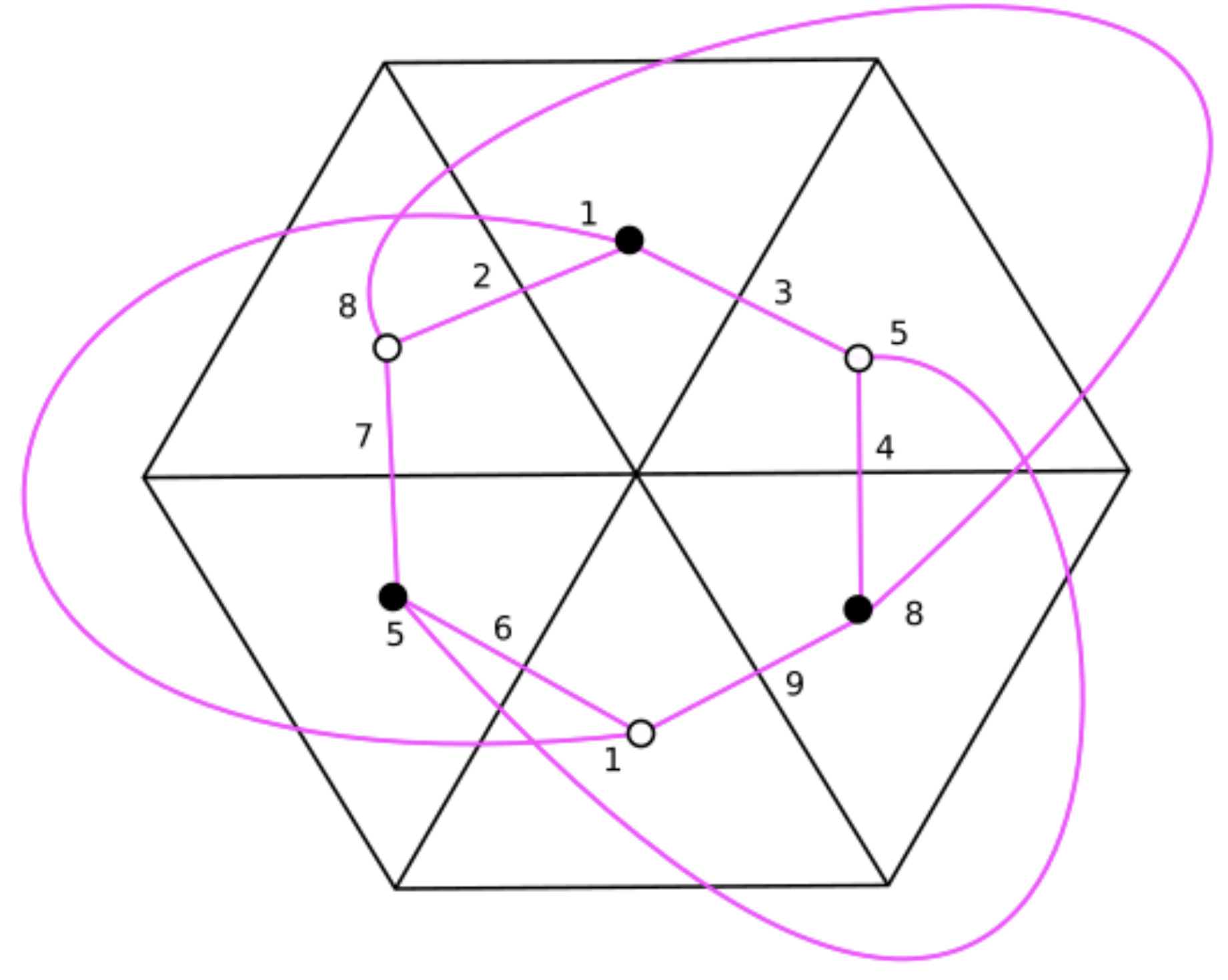}
    \caption{Determining Permutations from Dessin on Billiards Surface of Equilateral Triangle}
    \label{fig:monodromy}
\end{figure}

%The monodromy group of the dessin associated to a triangular billiards surface will be the central focus of this paper.

\subsection*{Dessin on Triangular Billiards Surfaces}

In this paper, we examine the dessin that gets drawn on a triangular rational billiard surface. Each angle of these triangles is a rational multiple of $\pi$ \cite{AI}. To denote each triangle, we use the triple notation $(p_0, p_1, p_2)$ where $p_0, p_1, p_2 \in \mathbb{N}$. To retrieve the angles of a triangle, use $(\theta_0,\theta_1,\theta_2)=(\frac{p_0\pi}{n}, \frac{p_1\pi}{n}, \frac{p_2\pi}{n})$ where $n = p_0 + p_1 + p_2$ (e.g. the triple $(1, 1, 1)$ represents the triangle with angles $(\frac{\pi}{3}, \frac{\pi}{3}, \frac{\pi}{3})$, or the equilateral triangle). Note that if $\gcd(p_0, p_1, p_2) = k \not= 1$, the triple can be reduced to $(p_0, p_1, p_2) = (\frac{p_0}{k}, \frac{p_1}{k}, \frac{p_2}{k})$. Therefore, we will only consider triples in their most reduced form, writing $T(p_0, p_1, p_2)$ for the rational triangle and $X(p_0, p_1, p_2)$ for its corresponding billiards surface.

The number of copies of $T(p_0, p_1, p_2)$ needed to form $X(p_0, p_1, p_2)$ is $2n$ \cite{AI}. A natural way of drawing the dessin on this billiard surface is drawing the black vertices on the copies with the same orientation as the original triangle and the white vertices on the copies with the opposite orientation. The edges are then drawn between two vertices if their corresponding triangles share an edge. We write $D(p_0, p_1, p_2)$ for the dessin drawn on the billiards surface $X(p_0,p_1,p_2)$. From these conditions for drawing the dessin on a triangular rational billiard surface, some basic properties can be stated:
\begin{center}
\begin{enumerate}
    \item $D(p_0, p_1, p_2)$ will have $n$ white vertices and $n$ black vertices.
    \item Each vertex in $D(p_0, p_1, p_2)$ has degree $3$.
    \item $D(p_0, p_1, p_2)$ will have $3n$ edges.
\end{enumerate}
\end{center}
Write each of the permutations $\sigma_0$ and $\sigma_1$ as a product of disjoint cycles. The cycles of $\sigma_0$ correspond to the cyclic ordering of the black vertices while the cycles of $\sigma_1$ correspond to the cyclic ordering of the white vertices. One can easily derive the following properties of $\sigma_0$ and $\sigma_1$:
\begin{center}
\begin{enumerate}
    \item $|\langle \sigma_0 \rangle| = | \langle\sigma_1 \rangle | = 3$
    \item The permutations $\sigma_0$ and $\sigma_1$ can each be written as a product of $n$ disjoint $3$-cycles.
\end{enumerate}
\end{center}

%%%%%%%%%%%%%%%%%%%%%%%%%%%%%%%%%%%%%%%%%%%%%%%%%%%%%%%%%%%%%%%%%%%%%%%%%%%%%%%%%%%%%%%%%%%%%%%%%%%%%%%%%%%%%%%%%%%%%%%%%%%%%%%%%%%%%
%\section*{Proof of Theorem} 
\section*{Results}
\vspace{.2 in}
The main goal of this section is the classification of all monodromy groups corresponding to triangular billiards surfaces, as stated in the following theorem.
\begin{theo}
Fix $p_0,p_1,p_2\in\mathbb{N}$ with $\gcd(p_0,p_1,p_2)=1$. Let $G= \langle \sigma_0, \sigma_1 \rangle$ be the monodromy group of the dessin $D(p_0,p_1,p_2)$ drawn on the triangular billiards surface $X(p_0,p_1,p_2)$. Setting $N=\langle \sigma_0\sigma_1, \sigma_1\sigma_0 \rangle$ and $H=\langle \sigma_0 \rangle$, we have $G = N \rtimes H$. Furthermore, if $n = p_0 + p_1 + p_2$ and $\alpha = \gcd(n, p_0p_1 - p_2^2)$, then
$$
G \cong (C_n \times C_{\frac{n}{\alpha}}) \rtimes C_3.
$$
\end{theo}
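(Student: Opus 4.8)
The plan is to first make the two permutations completely explicit. The $2n$ copies of $T(p_0,p_1,p_2)$ comprising $X(p_0,p_1,p_2)$ are naturally indexed by the dihedral group $D_n=\langle \rho,\tau\mid \rho^n=\tau^2=1,\ \tau\rho\tau=\rho^{-1}\rangle$, the $n$ black copies corresponding to the rotations $\rho^k$ and the $n$ white copies to the reflections $\rho^k\tau$; reflecting a copy across its side opposite the vertex $V_i$ amounts to right multiplication by a fixed reflection $\bar r_i\in D_n$, and one computes $\bar r_2=\tau$, $\bar r_1=\rho^{p_0}\tau$, $\bar r_0=\rho^{-p_1}\tau$ (up to an overall orientation convention). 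Labelling the $3n$ edges by pairs $(k,i)\in(\mathbb{Z}/n)\times\{0,1,2\}$ — the edge joining the black copy $\rho^k$ to the white copy $\rho^k\bar r_i$ — and using that black copies carry the orientation of $X$ while white copies carry the reverse, I would read off
\[
\sigma_0(k,i)=(k,i+1),\qquad \sigma_1(k,0)=(k-p_1,2),\quad \sigma_1(k,1)=(k-p_2,0),\quad \sigma_1(k,2)=(k-p_0,1),
\]
with $i$ read mod $3$ and $k$ mod $n$. I expect this translation of the gluing combinatorics and the counterclockwise vertex orderings into formulas to be the main obstacle; the rest is elementary.

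Given these formulas, a direct computation shows that $x:=\sigma_0\sigma_1$ and $y:=\sigma_1\sigma_0$ are \emph{level-preserving}: each fixes every set $(\mathbb{Z}/n)\times\{i\}$ and acts on it by a translation of $\mathbb{Z}/n$, the three translation amounts of $x$ being a cyclic rearrangement of $-p_0,-p_1,-p_2$, and similarly for $y$. Hence $x$ and $y$ lie in the abelian group $A$ of all level-preserving translations, $A\cong(\mathbb{Z}/n)^3$, so $N=\langle x,y\rangle$ is abelian. Setting $z:=\sigma_0 x\sigma_0^{-1}$ one checks $xyz=1$, so $N=\langle x,y,z\rangle$, and conjugation by $\sigma_0$ (and, separately, by $\sigma_1$) permutes $\{x,y,z\}$ cyclically.

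To obtain $G=N\rtimes H$ with $H=\langle\sigma_0\rangle$: normality of $N$ in $G=\langle\sigma_0,\sigma_1\rangle$ is immediate from the last sentence; the identity $\sigma_1=\sigma_0^{-1}x$ gives $G\subseteq NH$, hence $G=NH$; and $N\cap H=1$ because every element of $N$ is level-preserving while $\sigma_0$ and $\sigma_0^2$ carry level $i$ to level $i\pm 1$. Since $|\langle\sigma_0\rangle|=3$ we conclude $H\cong C_3$ and $G=N\rtimes H$.

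Finally, identify $N$. Each generator of $N$, viewed in $A\cong(\mathbb{Z}/n)^3$, has coordinate sum $p_0+p_1+p_2=n\equiv 0$, so $N$ is the image of the homomorphism $\phi\colon(\mathbb{Z}/n)^2\to A$ sending $(a,b)$ to $a v_x+b v_y$, where $v_x,v_y\in(\mathbb{Z}/n)^3$ are the translation vectors of $x,y$; in particular $N\cong C_{d_1}\times C_{d_2}$ with $d_1\mid d_2\mid n$. Using that the coordinate sums vanish, $\ker\phi$ is cut out by two of the three coordinate equations, a $2\times 2$ system over $\mathbb{Z}/n$ whose matrix has entries of gcd $1$ and determinant $\pm(p_0p_1-p_2^2)$; its kernel therefore has order $\gcd(n,p_0p_1-p_2^2)=\alpha$, so $|N|=n^2/\alpha$. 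On the other hand $x$ has order exactly $n$, since its order is $\operatorname{lcm}\!\big(\tfrac{n}{\gcd(n,p_0)},\tfrac{n}{\gcd(n,p_1)},\tfrac{n}{\gcd(n,p_2)}\big)$, which equals $n$ precisely because $\gcd(p_0,p_1,p_2)=1$. Thus $N$ has exponent $n$, forcing $d_2=n$ and hence $d_1=n/\alpha$, so $N\cong C_n\times C_{n/\alpha}$ and $G\cong(C_n\times C_{n/\alpha})\rtimes C_3$. (When $(p_0,p_1,p_2)=(1,1,1)$ this gives $\alpha=n=3$ and $N\cong C_3$, recovering $C_3\times C_3$ as in the equilateral example; note that the $\phi$-argument covers this degenerate case directly, whereas an approach via the Smith normal form of the $3\times 3$ circulant of the $p_i$ would need to treat it separately, its determinant being $0$.)
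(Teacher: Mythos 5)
Your proposal is correct and follows the same overall strategy as the paper: the same edge labels $(m,i)$, the same permutation formulas, the same subgroups $N=\langle\sigma_0\sigma_1,\sigma_1\sigma_0\rangle$ and $H=\langle\sigma_0\rangle$, and the same three conditions (normality, trivial intersection, $NH=G$) verified to get the semidirect product. Two of your local arguments differ from the paper's in ways worth recording. First, for $N\cap H=\{id\}$ the paper argues by contradiction: a nontrivial intersection would force $G=N$ to be abelian, hence $\sigma_0$ and $\sigma_1$ to commute, which happens only for the triple $(1,1,1)$, and that case is then excluded by hand. Your observation that every element of $N$ preserves each level $(\mathbb{Z}/n)\times\{i\}$ while $\sigma_0$ and $\sigma_0^2$ shift levels is a uniform one-line replacement that needs no case analysis; the same level-preservation picture, together with $xyz=1$ and the cyclic permutation of $\{x,y,z\}$ under conjugation, packages the paper's Lemma on normality more conceptually. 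Second, to determine $N$ the paper embeds it in $(\mathbb{Z}/n)^3$ and row-reduces the $3\times 2$ matrix of translation vectors (using the congruence $p_0p_2-p_1^2\equiv p_0p_1-p_2^2\pmod n$) to obtain the order $n^2/\alpha$, with the structure $C_n\times C_{n/\alpha}$ implicit in the reduced columns; you instead present $N$ as a quotient of $(\mathbb{Z}/n)^2$, get $|\ker\phi|=\alpha$ from the gcd of entries and the determinant of the $2\times 2$ minor, and then pin down the isomorphism type from the fact that $\sigma_0\sigma_1$ has order exactly $n$. The two computations are essentially dual and both are valid; your exponent argument is a clean way to extract the group structure, and it handles the degenerate case $p_0p_1-p_2^2\equiv 0$ without special pleading.
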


\subsection*{Notation}
The billiards surface of a triangle is generated using reflections. Fix a triangle with a black vertex and label its vertex $0$. Up to translation, half of the triangles (which have black vertices) of the billiards surface will be rotations of this fixed triangle while the other half (which have white vertices) will be reflections of those rotations. A useful way of labeling an edge on the dessin is in reference to the unique black vertex it is connected to. The black vertex labeled $m$, where $m$ is an integer and $0\le m<2n$, will be associated with the triangle rotated $\frac{2m\pi}{n}$ radians counterclockwise from the starting triangle. An example of this system for labeling the vertices is provided in Figure \ref{fig:triangle235}. 

%The billiards surface of a triangle is generated using reflections. Up to translation, half of the triangles will be rotations while the other half will be reflections of those rotations. A useful way of labeling an edge on the dessin is in reference to the black vertex it is connected to. To order the black vertices, choose one to start at. Label this vertex $0$, then the vertex labeled $m$ will be associated with the triangle rotated $\frac{2m\pi}{n}$ counter clockwise from the starting triangle. An example of this system for labeling the vertices is provided below:

\begin{figure}[!h]
\centering
    \includegraphics[width=0.6\linewidth]{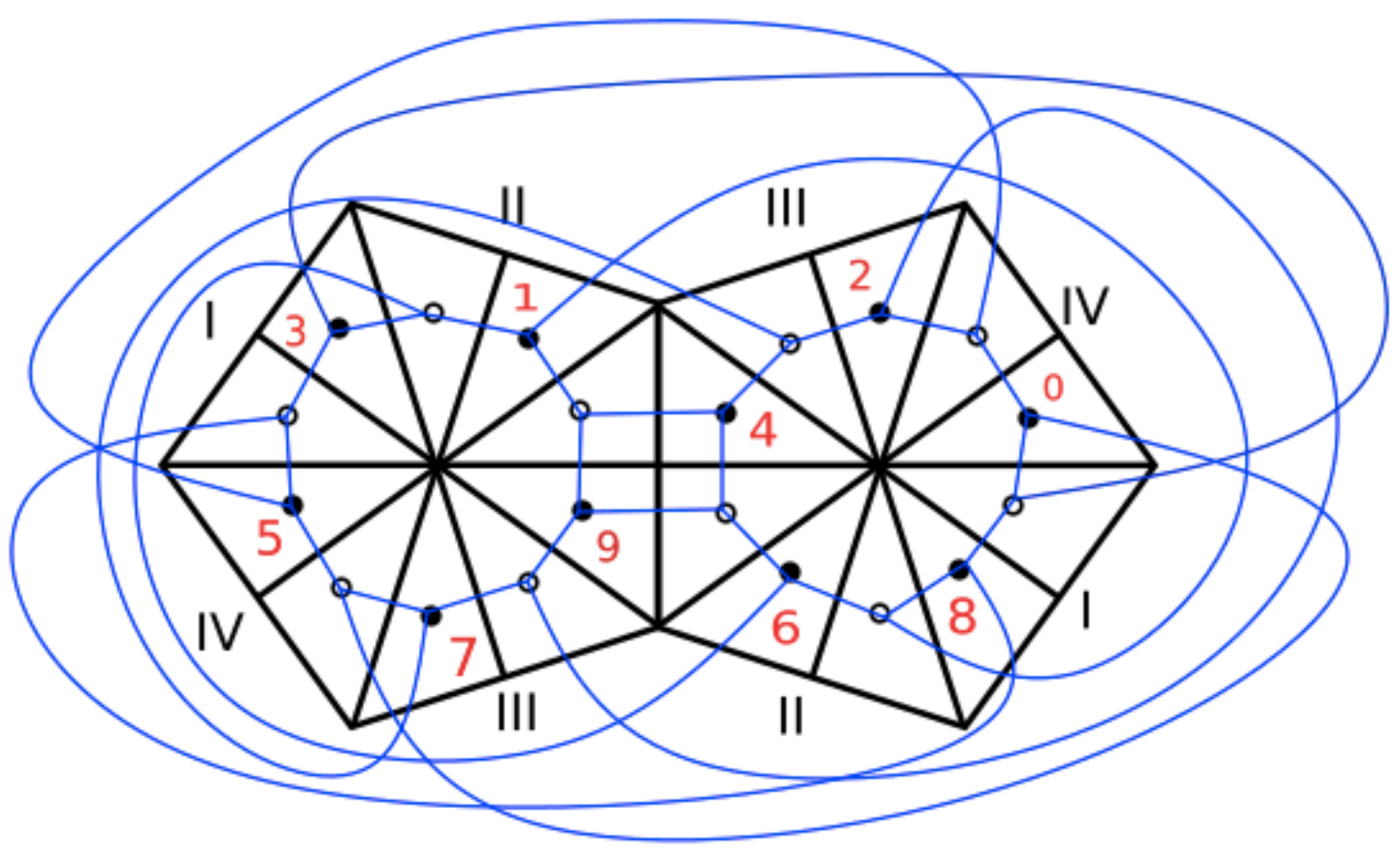}
    \caption{Labeling Black Vertices on Dessin of Billiards Surface of $\frac{2\pi}{10},\frac{3\pi}{10},\frac{5\pi}{10}$ Triangle}
    \label{fig:triangle235}
\end{figure}

\newpage
Label each side of a triangle $s_i$ where $\theta_i$ is the angle opposite of that side. Label each edge $(m,i)$ where $m$ is the black vertex incident to the edge and $s_i$ is the side of the triangle through which the edge passes. For this notation system, it is important that the angles ($\theta_0, \theta_1,\theta_2$) of a triangle  with a black vertex are ordered counterclockwise.

%\begin{figure}[!h]
%\centering
%    \includegraphics[width=0.4\linewidth]{}
%    \caption{Labeling Edges on Dessin of Billiards Surface of Equilateral Triangle}
%\end{figure}

\begin{figure}[!h]
\centering
    \includegraphics[width=0.8\linewidth]{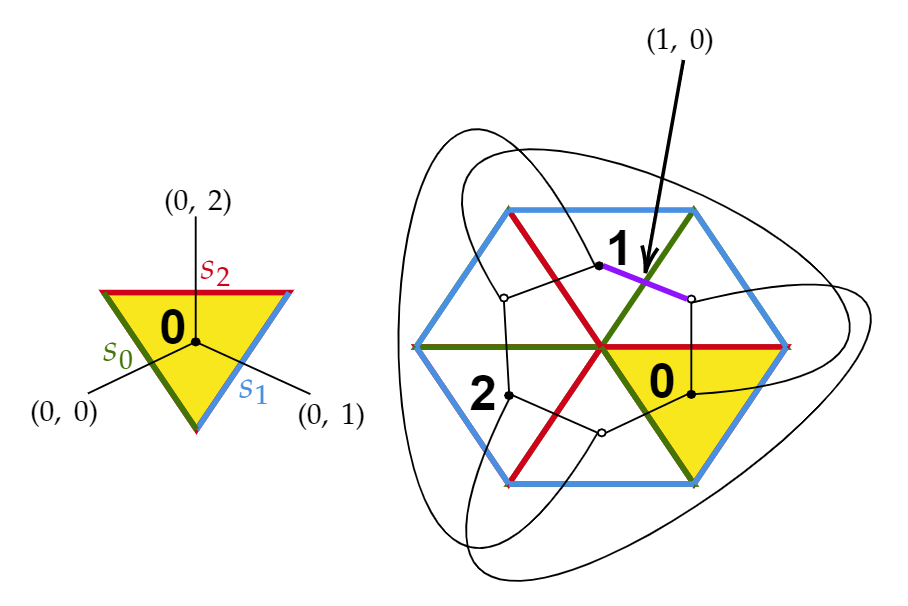}
    \caption{Labeling Edges on Dessin of Billiards Surface of Equilateral Triangle}
\end{figure}

The monodromy group acts faithfully (i.e. only the trivial permutation fixes every edge) on the set of edges according to the following formulas:

%This labeling of the edges allows for an alternative to the cyclic definition for $\sigma_0$ and $\sigma_1$, namely:
\begin{align}
    \sigma_0[(m,i)]&=
    \begin{cases}
          (m,1) &\mbox{if } i = 0\\
          (m,2) &\mbox{if } i = 1\\
          (m,0) &\mbox{if } i = 2
    \end{cases} \notag
    \\
    \sigma_1[(m,i)]&=
    \begin{cases}
          (m-p_1,2) &\mbox{if } i = 0\\
          (m-p_2,0) &\mbox{if } i = 1\\
          (m-p_0,1) &\mbox{if } i = 2
    \end{cases} \label{eqn1}
\end{align}

%The input $(m,i)$ represents the starting edge before the operation, and the output represents the resulting edge after applying the operation.

\begin{lem}\label{elements_commute}
The permutations $\sigma_0\sigma_1$ and $\sigma_1\sigma_0$ commute.
\end{lem}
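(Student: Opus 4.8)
The plan is to compute the action of both $\sigma_0\sigma_1$ and $\sigma_1\sigma_0$ on an arbitrary edge $(m,i)$ directly from the formulas in \eqref{eqn1}, and then to observe that both permutations have a very restricted shape: each one fixes the second coordinate $i$ and acts on the first coordinate $m$ (read modulo $2n$) by a translation whose amount depends only on $i$. Two permutations of this kind automatically commute, because translations of $\Z/2n\Z$ commute and the edge set splits as a disjoint union of the three sets $E_i=\{(m,i):0\le m<2n\}$, each of which is invariant under both maps.

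Concretely, I would first record the three-case computation for $\sigma_0\sigma_1$ (using the convention $(\sigma_0\sigma_1)[x]=\sigma_0[\sigma_1[x]]$): reading off \eqref{eqn1} and then applying $\sigma_0$ yields $\sigma_0\sigma_1[(m,0)]=(m-p_1,0)$, $\sigma_0\sigma_1[(m,1)]=(m-p_2,1)$, and $\sigma_0\sigma_1[(m,2)]=(m-p_0,2)$. Applying $\sigma_0$ first and then $\sigma_1$ gives $\sigma_1\sigma_0[(m,0)]=(m-p_2,0)$, $\sigma_1\sigma_0[(m,1)]=(m-p_0,1)$, and $\sigma_1\sigma_0[(m,2)]=(m-p_1,2)$, where again all first coordinates are taken mod $2n$. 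Thus on $E_0$ the map $\sigma_0\sigma_1$ is translation by $-p_1$ and $\sigma_1\sigma_0$ is translation by $-p_2$; on $E_1$ they are translations by $-p_2$ and $-p_0$; on $E_2$ they are translations by $-p_0$ and $-p_1$.

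To finish, I would note that on $E_0$ both composites $(\sigma_0\sigma_1)(\sigma_1\sigma_0)$ and $(\sigma_1\sigma_0)(\sigma_0\sigma_1)$ act as translation by $-p_1-p_2$, on $E_1$ both act as translation by $-p_0-p_2$, and on $E_2$ both act as translation by $-p_0-p_1$; hence the two composites induce the same permutation of the edge set. Since the monodromy group acts faithfully on the edges (as stated just before the lemma), this equality of actions forces $(\sigma_0\sigma_1)(\sigma_1\sigma_0)=(\sigma_1\sigma_0)(\sigma_0\sigma_1)$, which is the claim.

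I do not anticipate a real obstacle: the lemma is essentially a bookkeeping verification. The only things needing care are fixing a consistent composition convention and keeping the arithmetic in the first coordinate modulo $2n$. If one prefers to avoid the explicit case-by-case conclusion, an equivalent and slightly cleaner phrasing is to observe that both $\sigma_0\sigma_1$ and $\sigma_1\sigma_0$ lie in the subgroup of $S_{6n}$ consisting of all permutations that preserve each $E_i$ and act on it by a translation of $\Z/2n\Z$; this subgroup is isomorphic to $(\Z/2n\Z)^3$ and is therefore abelian, so its elements commute.
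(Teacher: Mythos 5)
Your proof is correct and follows essentially the same route as the paper's: compute $\sigma_0\sigma_1$ and $\sigma_1\sigma_0$ case by case from \eqref{eqn1}, observe each preserves the second coordinate and translates the first, and conclude the two composites agree on every edge (your formulas match the paper's \eqref{eqn2} and \eqref{eqn3} exactly). The only quibble is bookkeeping: the dessin has $3n$ edges, not $6n$, so the index $m$ ranges over the $n$ black-vertex labels rather than all of $\Z/2n\Z$ --- but this does not affect the argument, since translations of any cyclic index set commute.
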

\begin{proof}
%The permutations $\sigma_0$ and $\sigma_1$ can be defined as the following:
%\begin{align*}
%    \sigma_0[(m,i)]&=
%    \begin{cases}
%          (m,1) &\mbox{if } i = 0\\
%          (m,2) &\mbox{if } i = 1\\
%          (m,0) &\mbox{if } i = 2
%    \end{cases} 
%    \\
%    \sigma_1[(m,i)]&=
%    \begin{cases}
%          (m-a,2) &\mbox{if } i = 0\\
%          (m-b,0) &\mbox{if } i = 1\\
%          (m-c,1) &\mbox{if } i = 2
%    \end{cases} 
%\end{align*}
%where $m$ is in modulo $n$. 

Using \eqref{eqn1}, the permutations $\sigma_0\sigma_1$ and $\sigma_1\sigma_0$ can be computed to be the following:

\begin{align}
\sigma_0\sigma_1[(m,i)]&=
\begin{cases}
(m-p_1,0) &\mbox{if } i = 0\\
(m-p_2,1) &\mbox{if } i = 1\\
(m-p_0,2) &\mbox{if } i = 2 \end{cases} 
    \notag \\
    \sigma_1\sigma_0[(m,i)]&=
    \begin{cases}
          (m-p_2,0) &\mbox{if } i = 0\\
          (m-p_0,1) &\mbox{if } i = 1\\
          (m-p_1,2) &\mbox{if } i = 2
    \end{cases} \label{eqn2}
\end{align}
\noindent 
Furthermore, we compute

\begin{align}
    (\sigma_0\sigma_1)(\sigma_1\sigma_0)[(m,i)]&=
    \begin{cases}
          (m-p_2-p_1,0) &\mbox{if } i = 0\\
          (m-p_0-p_2,1) &\mbox{if } i = 1\\
          (m-p_1-p_0,2) &\mbox{if } i = 2
    \end{cases} =
    \begin{cases}
          (m+p_0,0) &\mbox{if } i = 0\\
          (m+p_1,1) &\mbox{if } i = 1\\
          (m+p_2,2) &\mbox{if } i = 2
    \end{cases} 
     \notag\\
    (\sigma_1\sigma_0)(\sigma_0\sigma_1)[(m,i)]&=
    \begin{cases}
          (m-p_1-p_2,0) &\mbox{if } i = 0\\
          (m-p_2-p_0,1) &\mbox{if } i = 1\\
          (m-p_0-p_1,2) &\mbox{if } i = 2
    \end{cases} 
    =
    \begin{cases}
          (m+p_0,0) &\mbox{if } i = 0\\
          (m+p_1,1) &\mbox{if } i = 1\\
          (m+p_2,2) &\mbox{if } i = 2
    \end{cases} \label{eqn3}
\end{align}
demonstrating that $\sigma_0\sigma_1$ and $\sigma_1\sigma_0$ commute.
\end{proof}

\vspace{.2 in}

\begin{lem}\label{lemma:cong}
Given a triple $(p_0, p_1, p_2)$ and $n = p_0 + p_1 + p_2$, then $p_0p_1-p_2^2 \equiv p_0p_2-p_1^2 \equiv p_1p_2-p_0^2 \pmod{n}$.
\end{lem}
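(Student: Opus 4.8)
The plan is to prove the two pairwise congruences directly by showing that each difference of consecutive expressions factors through $n = p_0+p_1+p_2$. First I would compute
$$(p_0p_1 - p_2^2) - (p_0p_2 - p_1^2) = p_0(p_1 - p_2) + (p_1^2 - p_2^2) = (p_1 - p_2)(p_0 + p_1 + p_2) = (p_1 - p_2)\,n,$$
which is $\equiv 0 \pmod n$, establishing $p_0p_1 - p_2^2 \equiv p_0p_2 - p_1^2 \pmod{n}$.

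Next I would carry out the analogous manipulation for the remaining pair,
$$(p_0p_2 - p_1^2) - (p_1p_2 - p_0^2) = p_2(p_0 - p_1) + (p_0^2 - p_1^2) = (p_0 - p_1)(p_0 + p_1 + p_2) = (p_0 - p_1)\,n \equiv 0 \pmod{n}.$$
Chaining the two congruences gives $p_0p_1 - p_2^2 \equiv p_0p_2 - p_1^2 \equiv p_1p_2 - p_0^2 \pmod{n}$, which is the claim; the third pairwise congruence then follows by transitivity (or by the same one-line computation).

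There is no genuine obstacle here: the only thing to notice is that grouping a linear term with a difference of squares makes the common factor $p_0+p_1+p_2 = n$ appear, after which the congruences are immediate. In the writeup I would simply display the two identities above and conclude. I might also add a remark that this symmetry is precisely what guarantees that the quantity $\alpha = \gcd(n,\, p_0p_1 - p_2^2)$ used in the statement of the main theorem does not depend on the ordering of $p_0, p_1, p_2$.
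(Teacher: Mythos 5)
Your proof is correct and takes essentially the same elementary route as the paper: the paper substitutes $p_2 \equiv -p_0-p_1 \pmod{n}$ and expands, while you factor the difference of the two expressions as a multiple of $p_0+p_1+p_2$, which is just a cosmetic rearrangement of the same computation. Both identities you display check out, so nothing further is needed.
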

\begin{proof}
Since $p_2\equiv -p_0-p_1\pmod{n}$,
$$
p_0p_1-p_2^2 \equiv p_0p_1-(-p_0-p_1)^2 = p_0p_1-2p_0p_1-p_0^2-p_1^2 = $$
$$-p_0p_1-p_0^2-p_1^2 = p_0(-p_1-p_0)-p_1^2 \equiv p_0p_2 - p_1^2 \pmod{n}.
$$
The other congruence is proven similarly.
\end{proof}
\vspace{.2 in}

\begin{lem}
Let $\alpha = \gcd(n, p_0p_1-p_2^2)$. Then $|\langle \sigma_0\sigma_1, \sigma_1\sigma_0\rangle| = \frac{n^2}{\alpha}$.
\end{lem}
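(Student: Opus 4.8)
The plan is to realize $N=\langle\sigma_0\sigma_1,\sigma_1\sigma_0\rangle$ as an explicit subgroup of $(\mathbb{Z}/n\mathbb{Z})^3$ and then count. By \eqref{eqn2}, both $\sigma_0\sigma_1$ and $\sigma_1\sigma_0$ preserve the second coordinate $i$ of each edge $(m,i)$; since \eqref{eqn1} shows $\sigma_1$ permutes the three edge-classes $E_i=\{(m,i):m\}$ cyclically, each $E_i$ has $n$ edges, which we index by $m\in\mathbb{Z}/n\mathbb{Z}$ (legitimate because $\sigma_0^3=\sigma_1^3=1$ forces the index to be read mod $n$). On each $E_i$ the two generators act by a translation of $m$, so every element of $N$ stabilizes each $E_i$ and acts there by a translation; recording the three translation amounts gives a homomorphism $\Phi\colon N\to(\mathbb{Z}/n\mathbb{Z})^3$, injective because the monodromy action is faithful. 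Reading \eqref{eqn2} off coordinatewise, $\Phi(\sigma_0\sigma_1)=(-p_1,-p_2,-p_0)$ and $\Phi(\sigma_1\sigma_0)=(-p_2,-p_0,-p_1)$, so $|N|$ equals the order of the subgroup $\langle u,v\rangle\le(\mathbb{Z}/n\mathbb{Z})^3$ with $u=(-p_1,-p_2,-p_0)$ and $v=(-p_2,-p_0,-p_1)$.

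Since $\gcd(p_0,p_1,p_2)=1$ we have $\gcd(n,p_0,p_1,p_2)=1$, so $u$ and $v$ each have order $n$ in $(\mathbb{Z}/n\mathbb{Z})^3$. I would then consider the surjection $\Psi\colon(\mathbb{Z}/n\mathbb{Z})^2\twoheadrightarrow\langle u,v\rangle$, $(s,t)\mapsto su+tv$; as the target is abelian, $\langle u,v\rangle=\operatorname{im}\Psi$, hence $|N|=n^2/|\ker\Psi|$. The lemma thus reduces to the single claim $|\ker\Psi|=\alpha$.

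Writing $\Psi(s,t)=0$ coordinatewise gives $p_1s+p_2t\equiv 0$, $p_2s+p_0t\equiv 0$, $p_0s+p_1t\equiv 0\pmod n$; adding the three congruences yields $n(s+t)\equiv 0$, so the third is implied by the first two, and I am left counting solutions of $\left(\begin{smallmatrix}p_1&p_2\\p_2&p_0\end{smallmatrix}\right)\binom{s}{t}\equiv 0\pmod n$. The cleanest finish is through the Smith normal form of this integer matrix: the gcd of its entries is $\gcd(p_0,p_1,p_2)=1$ and its determinant is $p_0p_1-p_2^2$, so its invariant factors are $1$ and $|p_0p_1-p_2^2|$, and the number of solutions modulo $n$ is $\gcd(n,1)\cdot\gcd(n,p_0p_1-p_2^2)=\alpha$ (reading $\gcd(n,0)=n$ in the degenerate case $p_0p_1=p_2^2$). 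A hands-on alternative avoiding Smith form: eliminating one variable between the first two congruences gives $(p_0p_1-p_2^2)s\equiv 0$ and $(p_0p_1-p_2^2)t\equiv 0\pmod n$, so $\ker\Psi\subseteq(\tfrac{n}{\alpha}\mathbb{Z}/n\mathbb{Z})^2$; projection onto the first coordinate is injective on $\ker\Psi$ (if $(0,t)\in\ker\Psi$ then $p_it\equiv 0\pmod n$ for all $i$, forcing $t\equiv 0$), whence $|\ker\Psi|\le\alpha$; and for equality one exhibits, for the generator $s_0=\tfrac n\alpha$ of the order-$\alpha$ subgroup $\{s:(p_0p_1-p_2^2)s\equiv 0\pmod n\}$, a companion $t_0$, using Lemma \ref{lemma:cong} (which forces $\alpha\mid p_0p_1-p_2^2$, $\alpha\mid p_0p_2-p_1^2$, $\alpha\mid p_1p_2-p_0^2$) together with the fact that every prime dividing $\alpha$ is coprime to each $p_i$ (else it would divide all three), so each $p_i$ is invertible mod $\alpha$ and one may take $t_0=-\tfrac n\alpha\,p_1p_2^{-1}$.

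I expect the genuine obstacle to be this last equality: the easy inclusions only give $|\ker\Psi|\le\alpha$ (or $\le\alpha^2$ before the projection trick), and showing the count is exactly $\alpha$ is where Lemma \ref{lemma:cong} is essential — it guarantees that the three $2\times 2$ minors built from the coefficient array are congruent mod $n$, so the same integer $\alpha$ governs the solution set regardless of which pair of the three congruences one uses. Everything preceding this step — the embedding into $(\mathbb{Z}/n\mathbb{Z})^3$, the orders of $u$ and $v$, and the reduction $|N|=n^2/|\ker\Psi|$ — is routine.
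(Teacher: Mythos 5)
Your proof is correct and follows essentially the same route as the paper: both embed $N$ into $(\mathbb{Z}/n\mathbb{Z})^3$ by recording the translation amounts on the three edge classes and then reduce to computing the order of the subgroup generated by $(-p_1,-p_2,-p_0)$ and $(-p_2,-p_0,-p_1)$, using $\gcd(p_0,p_1,p_2)=1$ at the key step. The only difference is bookkeeping: the paper row-reduces the $3\times 2$ matrix with explicit B\'ezout coefficients to get the order of the image directly (invoking Lemma \ref{lemma:cong} to collapse the third row), whereas you compute $n^2/|\ker\Psi|$ via the Smith normal form of the $2\times 2$ block after observing that the third congruence is the negative of the sum of the first two --- a nice touch that actually lets you dispense with Lemma \ref{lemma:cong} altogether.
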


\begin{proof}
We define a homomorphism $\varphi:\langle \sigma_0\sigma_1, \sigma_1\sigma_0\rangle\rightarrow (\Z\slash n\Z)^3$, where $(\Z\slash n\Z)^3$ is a $\Z\slash n\Z$ module viewed as a group, by $\varphi(\sigma_0\sigma_1)\mapsto v_1$ and $\varphi(\sigma_1\sigma_0)\mapsto v_2$ where
$$
    v_1 = 
    -\left[ 
    \begin{matrix}
     p_1 \\ p_2 \\ p_0
    \end{matrix}
    \right ],\ \ 
    v_2 = 
    -\left[ 
    \begin{matrix}
     p_2 \\ p_0 \\ p_1
    \end{matrix}
    \right ].
$$

First, we show that $\varphi$ is well-defined. Suppose that $(\sigma_0\sigma_1)^{k_1}(\sigma_1\sigma_0)^{k_2}=(\sigma_0\sigma_1)^{l_1}(\sigma_1\sigma_0)^{l_2}$ in the domain for some $k_1, k_2, l_1, l_2 \in \mathbb{Z}/n\mathbb{Z}$. Using \eqref{eqn3}, we compute
\begin{align}
    (\sigma_0\sigma_1)^{k_1}(\sigma_1\sigma_0)^{k_2}[(m,i)]&=
    \begin{cases}
          (m-k_2p_2-k_1p_1,0) &\mbox{if } i = 0\\
          (m-k_2p_0-k_1p_2,1) &\mbox{if } i = 1\\
          (m-k_2p_1-k_1p_0,2) &\mbox{if } i = 2
    \end{cases}\label{eqn4}
     \\
  \notag  (\sigma_0\sigma_1)^{l_1}(\sigma_1\sigma_0)^{l_2}[(m,i)]&=
     \begin{cases}
          (m-l_2p_2-l_1p_1,0) &\mbox{if } i = 0\\
          (m-l_2p_0-l_1p_2,1) &\mbox{if } i = 1\\
          (m-l_2p_1-l_1p_0,2) &\mbox{if } i = 2
    \end{cases} 
\end{align}
which implies that
\begin{align*}
    k_1(-p_1) + k_2(-p_2) &\equiv l_1(-p_1) + l_2(-p_2)\\
    k_1(-p_2) + k_2(-p_0) &\equiv l_1(-p_2) + l_2(-p_0)\\
    k_1(-p_0) + k_2(-p_1) &\equiv l_1(-p_0) + l_2(-p_1).
\end{align*}

Observe that $$\varphi((\sigma_0\sigma_1)^{k_1}(\sigma_1\sigma_0)^{k_2})=\left[ 
    \begin{matrix}
     k_1(-p_1) + k_2(-p_2) \\
    k_1(-p_2) + k_2(-p_0) \\
    k_1(-p_0) + k_2(-p_1)
    \end{matrix}
    \right ]=\varphi((\sigma_0\sigma_1)^{l_1}(\sigma_1\sigma_0)^{l_2})=\left[ 
    \begin{matrix}
     l_1(-p_1) + l_2(-p_2) \\
     l_1(-p_2) + l_2(-p_0)\\
     l_1(-p_0) + l_2(-p_1)
    \end{matrix}
    \right ]$$
    and thus $\varphi$ is well-defined.

%To show this map is surjective, take an arbitrary vector $w \in \Z\slash n\Z$-module $\text{Span}\{v_1, v_2\}$. Then $w = hv_1 + jv_2$ for some $h, j \in \Z\slash n\Z$. Then $(\sigma_0\sigma_1)^h(\sigma_1\sigma_0)^j \rightarrow w$. This shows that the map is surjective.

We claim that $\varphi$ is injective. To show this, suppose that there are $k_1, k_2 \in \mathbb{Z}/n\mathbb{Z}$ such that $\varphi((\sigma_0\sigma_1)^{k_1}(\sigma_1\sigma_0)^{k_2})=\left[\begin{matrix} 0\\0\\0\end{matrix}  \right]\in (\mathbb{Z}/n\mathbb{Z})^3$. By examining \eqref{eqn4}, we deduce that $(\sigma_0\sigma_1)^{k_1}(\sigma_1\sigma_0)^{k_2}$ must fix every edge of the dessin. Hence, $(\sigma_0\sigma_1)^{k_1}(\sigma_1\sigma_0)^{k_2}$ is the identity element since the monodromy group acts faithfully on the edges of the dessin thus showing that $\varphi$ is injective. Therefore, $\langle \sigma_0\sigma_1, \sigma_1\sigma_0\rangle$ maps bijectively onto $\text{Span}\{v_1, v_2\}$.

Therefore, finding the order of $\text{Span}\{v_1, v_2\}$ will also give the order of $\langle \sigma_0\sigma_1, \sigma_1\sigma_0 \rangle$. We will proceed using row reduction on $[-v_1\ -v_2]$. First, recall that $\gcd(p_0, p_1, p_2) = 1$ and $p_0 + p_1 + p_2 = n$. This implies that $\gcd(p_1, p_2, n) = 1$, since a number that divides $p_1, p_2,$ and $n$ will also divide $p_0$. Thus there exist $s, t, u \in \mathbb{Z}$ such that $sp_1 + tp_2 + un = 1$ and as a consequence $sp_1 + tp_2 \equiv 1 \pmod{n}$.

\iffalse
Observe
$$
\left[
\begin{matrix}
 p_1 & p_2\\
 p_2 & p_0 \\
 p_0 & p_1
\end{matrix}
\right]
\left [
\begin{matrix}
 s & -p_2 \\
 t & p_1
\end{matrix}
\right ]
=
\left [
\begin{matrix}
 1 & 0\\
 sp_2 + tp_0 & p_0p_1 - p_2^2 \\
 sp_0 + tp_1 & p_1^2 - p_0p_2
\end{matrix}
\right ]
$$
where
$$
\text{det}\left (
\left [
\begin{matrix}
 s & -p_2 \\
 t & p_1
\end{matrix}
\right ]
\right ) = 1
$$
Then this matrix multiplication is equivalent to row reduction.
\fi

Observe
$$
\left[
\begin{matrix}
 1 & 0 &0\\
 -(sp_2+tp_0)&1&0\\
 -(sp_0+tp_1)&0&1
\end{matrix}
\right]
\left[
\begin{matrix}
 p_1 & p_2\\
 p_2 & p_0 \\
 p_0 & p_1
\end{matrix}
\right]
\left [
\begin{matrix}
 s & -p_2 \\
 t & p_1
\end{matrix}
\right ]
=
\left [
\begin{matrix}
 1 & 0\\
0& p_0p_1 - p_2^2 \\
0 & p_1^2 - p_0p_2
\end{matrix}
\right ]
$$

where $$
\mathrm{det}\left(\left[
\begin{matrix}
 1 & 0 &0\\
 -(sp_2+tp_0)&1&0\\
 -(sp_0+tp_1)&0&1
\end{matrix}
\right]\right)=
\mathrm{det}\left (
\left [
\begin{matrix}
 s & -p_2 \\
 t & p_1
\end{matrix}
\right ]
\right ) = 1.
$$

Recall by Lemma \ref{lemma:cong} that $p_0p_2 - p_1^2 \equiv p_0p_1 - p_2^2 \pmod{n}$. Then this row reduced matrix is equivalent to
$$
\left [
\begin{matrix}
 1 & 0\\
 0 & p_0p_1 - p_2^2\\
 0 & -(p_0p_1 - p_2^2)
\end{matrix}
\right ].
$$
Thus, $|\text{Span}\{v_1, v_2\}| = n\cdot \frac{n}{\alpha}$ where $\alpha = \gcd(n, p_0p_1-p_2^2)$. As a result, $|\langle \sigma_0\sigma_1, \sigma_1\sigma_0\rangle| = \frac{n^2}{\alpha}$.
\end{proof}
\vspace{.2 in}

\begin{lem}\label{lemma:normal}
$N$ is a normal subgroup of $G$.
\end{lem}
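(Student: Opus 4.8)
The plan is to show that $N=\langle \sigma_0\sigma_1,\sigma_1\sigma_0\rangle$ is closed under conjugation by the generators $\sigma_0$ and $\sigma_1$ of $G$; since conjugation is an automorphism and the generators generate $G$, this suffices. So I would compute $\sigma_0(\sigma_0\sigma_1)\sigma_0^{-1}$, $\sigma_0(\sigma_1\sigma_0)\sigma_0^{-1}$, $\sigma_1(\sigma_0\sigma_1)\sigma_1^{-1}$, and $\sigma_1(\sigma_1\sigma_0)\sigma_1^{-1}$, and check in each case that the result lies in $N$.

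The conjugates by $\sigma_1$ are immediate: $\sigma_1(\sigma_1\sigma_0)\sigma_1^{-1}=\sigma_1(\sigma_0\sigma_1)\in N$ trivially, and $\sigma_1(\sigma_0\sigma_1)\sigma_1^{-1}=(\sigma_1\sigma_0)(\sigma_1\sigma_1^{-1})\cdot$— more cleanly, $\sigma_1(\sigma_0\sigma_1)\sigma_1^{-1}=(\sigma_1\sigma_0)$ conjugated appropriately; in any case both $\sigma_1(\sigma_0\sigma_1)\sigma_1^{-1}$ and $\sigma_1(\sigma_1\sigma_0)\sigma_1^{-1}$ are visibly products of $\sigma_0\sigma_1$ and $\sigma_1\sigma_0$ and their inverses. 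Similarly $\sigma_0(\sigma_1\sigma_0)\sigma_0^{-1}=(\sigma_0\sigma_1)(\sigma_0\sigma_0^{-1})=\sigma_0\sigma_1\in N$, so the only genuinely nontrivial conjugate is $\sigma_0(\sigma_0\sigma_1)\sigma_0^{-1}=\sigma_0^2\sigma_1\sigma_0^{-1}$. Here I would use that $\sigma_0^3=1$ (established in the preliminary properties, $|\langle\sigma_0\rangle|=3$), so $\sigma_0^{-1}=\sigma_0^2$ and hence $\sigma_0(\sigma_0\sigma_1)\sigma_0^{-1}=\sigma_0^2\sigma_1\sigma_0^2=(\sigma_0^2\sigma_1\sigma_0^{-1})$; writing this as $(\sigma_0\cdot\sigma_0\sigma_1)\cdot(\sigma_0\sigma_1)^{-1}\cdot(\sigma_0\sigma_1)\sigma_0^{-1}$ is circular, so instead I would just verify directly using the edge formulas \eqref{eqn1} that $\sigma_0(\sigma_0\sigma_1)\sigma_0^{-1}$ acts on edges by a formula that coincides with $(\sigma_1\sigma_0)^{-1}(\sigma_0\sigma_1)^{-1}$ or some explicit word in $\sigma_0\sigma_1,\sigma_1\sigma_0$ — concretely, I expect $\sigma_0(\sigma_0\sigma_1)\sigma_0^{-1}=(\sigma_1\sigma_0)$ shifted, and a short computation with \eqref{eqn1} pins down exactly which element of $N$ it is.

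Alternatively, and perhaps more cleanly, I would observe that $N$ has index $3$ in $G$: by the previous lemma $|N|=n^2/\alpha$, and since $G=\langle\sigma_0\rangle N$ with $\sigma_0\notin N$ (as every element of $N$ fixes the side-index $i$ of each edge while $\sigma_0$ does not) and $\sigma_0^3=1$, we get $[G:N]\le 3$; combined with $\sigma_0\notin N$ this forces $[G:N]=3$. A subgroup that, together with an order-$3$ element, generates the whole group and has the cyclic quotient structure need not be normal in general, so this shortcut alone is not decisive — but it does reduce the problem to checking the single conjugate $\sigma_0 N \sigma_0^{-1}=N$, i.e. $\sigma_0(\sigma_0\sigma_1)\sigma_0^{-1}\in N$ and $\sigma_0(\sigma_1\sigma_0)\sigma_0^{-1}\in N$, exactly as above. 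The main obstacle is the bookkeeping in the single conjugation $\sigma_0^2\sigma_1\sigma_0^{-1}$: I would handle it by computing its action on a generic edge $(m,i)$ via \eqref{eqn1}, obtaining a map of the form $(m,i)\mapsto(m+c_i, i)$ for constants $c_i$ expressible in the $p_j$, and then matching $(c_0,c_1,c_2)$ against the image lattice $\mathrm{Span}\{v_1,v_2\}$ from the previous lemma to exhibit it as an explicit product of powers of $\sigma_0\sigma_1$ and $\sigma_1\sigma_0$.
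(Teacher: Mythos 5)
Your overall strategy is the paper's: check that conjugating each generator of $N$ by each generator of $G$ lands back in $N$, which suffices since these conjugates generate $\sigma_0 N\sigma_0^{-1}$ and $\sigma_1 N\sigma_1^{-1}$. You correctly identify $\sigma_0(\sigma_1\sigma_0)\sigma_0^{-1}=\sigma_0\sigma_1$ and (implicitly) $\sigma_1(\sigma_0\sigma_1)\sigma_1^{-1}=\sigma_1\sigma_0$ as the trivial cases, and your proposed treatment of the hard case $\sigma_0(\sigma_0\sigma_1)\sigma_0^{-1}=\sigma_0^2\sigma_1\sigma_0^2$ --- compute its action on a generic edge via \eqref{eqn1} and match it against the lattice $\mathrm{Span}\{v_1,v_2\}$ --- does work; the computation gives $(m,i)\mapsto(m-p_i,i)$, which is exactly $\bigl((\sigma_0\sigma_1)(\sigma_1\sigma_0)\bigr)^{-1}=(\sigma_1\sigma_0)^{-1}(\sigma_0\sigma_1)^{-1}$, the element you guessed. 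The paper instead dispatches this case purely algebraically, writing $\sigma_1=\sigma_1^2\cdot\sigma_1^2$ so that $\sigma_0^2\sigma_1\sigma_0^2=(\sigma_0^2\sigma_1^2)(\sigma_1^2\sigma_0^2)=(\sigma_1\sigma_0)^{-1}(\sigma_0\sigma_1)^{-1}$, which needs only $\sigma_0^3=\sigma_1^3=\mathrm{id}$ and no edge computation; that identity is worth knowing since it is shorter and makes the symmetry of the fourth case transparent.

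The one genuine error is your claim that the conjugates by $\sigma_1$ are ``immediate,'' in particular that $\sigma_1(\sigma_1\sigma_0)\sigma_1^{-1}=\sigma_1(\sigma_0\sigma_1)\in N$ trivially. That identity is false ($\sigma_1^2\sigma_0\sigma_1^{-1}\neq\sigma_1\sigma_0\sigma_1$ in general), and even if it held, $\sigma_1\sigma_0\sigma_1$ is \emph{not} in $N$: every element of $N$ preserves the side-index $i$ of an edge, while $\sigma_1\sigma_0\sigma_1$ sends $(m,0)$ to $(m-2p_1,2)$. In fact $\sigma_1(\sigma_1\sigma_0)\sigma_1^{-1}=\sigma_1^2\sigma_0\sigma_1^2$ is the second genuinely nontrivial case, symmetric to $\sigma_0^2\sigma_1\sigma_0^2$, and must be handled by the same argument (it equals $(\sigma_0\sigma_1)^{-1}(\sigma_1\sigma_0)^{-1}$). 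So your plan as written verifies only three of the four required containments correctly; the fix is simply to treat this fourth conjugate with the same method you reserved for $\sigma_0(\sigma_0\sigma_1)\sigma_0^{-1}$. Your alternative index-$3$ observation is fine as a reduction but, as you note, does not remove the need for the explicit conjugation checks by $\sigma_0$ --- and one would still need the $\sigma_1$-conjugations or an argument that $\sigma_1 N\sigma_1^{-1}=N$ follows from $\sigma_0 N\sigma_0^{-1}=N$ together with $G=N\langle\sigma_0\rangle$, which you do not supply.
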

\begin{proof}
Since $N = \langle \sigma_0\sigma_1, \sigma_1\sigma_0\rangle$ and $G = \langle \sigma_0, \sigma_1 \rangle$, proving $N\lhd G$ is equivalent to proving the following four statements:
\begin{itemize}
\item $\sigma_0(\sigma_1\sigma_0)\sigma_0^{-1} \in N$
\item $\sigma_1(\sigma_0\sigma_1)\sigma_1^{-1}\in N$
\item $\sigma_0(\sigma_0\sigma_1)\sigma_0^{-1}\in N$
\item $\sigma_1(\sigma_1\sigma_0)\sigma_1^{-1} \in N$
\end{itemize}
The first two statements are easy to show. To prove the third statement, observe that $\sigma_0^{-1}=\sigma_0^2$ and $\sigma_1^{-1}=\sigma_1^2$. Thus,
\begin{align*}
    \sigma_0(\sigma_0\sigma_1)\sigma_0^{-1}=\sigma_0^2\sigma_1\sigma_0^2=(\sigma_0^2\sigma_1^2)(\sigma_1^2\sigma_0^2)=(\sigma_1\sigma_0)^{-1}(\sigma_0\sigma_1)^{-1}\in N.
\end{align*}
The proof of the fourth statement is similar.
\end{proof}

\begin{lem}\label{lemma:intersection}
$N\cap H = \{id\}$
\end{lem}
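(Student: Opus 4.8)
The plan is to find a feature of the $N$-action on the edge set that the nontrivial elements of $H$ fail to share. Since $|\langle \sigma_0\rangle| = 3$, we have $H = \{id, \sigma_0, \sigma_0^2\}$, so $N \cap H = \{id\}$ will follow once I show that neither $\sigma_0$ nor $\sigma_0^2$ belongs to $N$.

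First I would use Lemma \ref{elements_commute}: because $\sigma_0\sigma_1$ and $\sigma_1\sigma_0$ commute, $N$ is abelian, and hence every element of $N$ can be written in the form $(\sigma_0\sigma_1)^{k_1}(\sigma_1\sigma_0)^{k_2}$ for integers $k_1, k_2$. Then I would invoke formula \eqref{eqn4}, which shows that any such permutation sends an edge $(m,i)$ to an edge $(m',i)$ having the same second coordinate $i$. In other words, the map $(m,i)\mapsto i$ from the edge set to $\{0,1,2\}$ is constant on every $N$-orbit, so each element of $N$ preserves the ``side type'' of every edge.

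Next I would observe from the defining formulas for $\sigma_0$ that $\sigma_0[(m,0)] = (m,1)$, and consequently $\sigma_0^2[(m,0)] = (m,2)$; both permutations alter the second coordinate of the edge $(m,0)$. Since no element of $N$ can do this, we conclude $\sigma_0 \notin N$ and $\sigma_0^2 \notin N$, whence $N\cap H = \{id\}$.

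I do not anticipate a genuine obstacle here. The only points requiring care are that every element of $N$ indeed has the claimed two-parameter form — which is exactly what Lemma \ref{elements_commute} provides — and that \eqref{eqn4} is read correctly as fixing the side index $i$ while only shifting the rotation index $m$.
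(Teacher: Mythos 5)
Your proof is correct, but it takes a genuinely different and more direct route than the paper. The paper argues by contradiction: if $N\cap H$ were nontrivial then $H\subset N$, hence $\sigma_1=(\sigma_1\sigma_0)\sigma_0^{-1}\in N$ and $N=G$; since $N$ is abelian this forces $\sigma_0$ and $\sigma_1$ to commute, which (by inspecting the formulas in Lemma \ref{elements_commute}) happens only for $p_0=p_1=p_2=1$, and that single case is then disposed of by a short direct computation. Your argument instead isolates an invariant of the $N$-action: every generator of $N$ (and hence, via the normal form $(\sigma_0\sigma_1)^{k_1}(\sigma_1\sigma_0)^{k_2}$ supplied by Lemma \ref{elements_commute}, every element of $N$) fixes the side index $i$ of each edge, whereas $\sigma_0$ and $\sigma_0^2$ visibly permute the side indices. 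In fact you do not even need commutativity for this step, since an arbitrary word in $\sigma_0\sigma_1$, $\sigma_1\sigma_0$, and their inverses already preserves $i$ because each generator does. What your approach buys is the elimination of all casework --- no special treatment of the equilateral triangle --- and a cleaner conceptual reason for the disjointness: $N$ acts only on the rotation coordinate $m$ while nontrivial elements of $H$ act only on the side coordinate $i$. What the paper's approach buys is a reusable observation (exactly when $\sigma_0$ and $\sigma_1$ commute) that illuminates the structure of $G$ for the degenerate triple. Both proofs are valid.
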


\begin{proof}
Recall that $N=\langle \sigma_0\sigma_1, \sigma_1\sigma_0 \rangle$ and $H=\langle \sigma_0 \rangle$. Suppose the intersection of these groups is not trivial. Then there is an element in $N$ that is equal to $\sigma_0$ or $\sigma_0^{-1}$ which implies that $H\subset N$. Then the following should also be true:
\begin{align*}
    \sigma_1\sigma_0\sigma_0^{-1}=\sigma_1\in N
\end{align*}
Then the group $N=\langle \sigma_0,\sigma_1\rangle=G$. Since $N$ is abelian by Lemma \ref{elements_commute}, the elements $\sigma_0$ and $\sigma_1$ must commute. By examining the proof of Lemma \ref{elements_commute}, we observe that $\sigma_0$ and $\sigma_1$ commute only when $p_0\equiv p_1\equiv p_2 \pmod{n}$. This only occurs when $p_0=p_1=p_2=1$. In this case, one observes that $N=\langle \sigma_0\sigma_1, \sigma_1\sigma_0 \rangle=\langle \sigma_0\sigma_1 \rangle$ since $\sigma_0\sigma_1=\sigma_1\sigma_0$. Therefore, either $\sigma_0\sigma_1=\sigma_0$ or $(\sigma_0\sigma_1)^2=\sigma_0$ since $\sigma_0\sigma_1$ has order $3$ as an element of $G$. If $\sigma_0\sigma_1=\sigma_0$ then $\sigma_1=id$, a contradiction. If $(\sigma_0\sigma_1)^2=\sigma_0$, then $\sigma_0=\sigma_1$ which is also a contradiction. Hence, $N\cap H=\{id\}.$

\end{proof}

\begin{lem}\label{lemma:product}
$NH = G$
\end{lem}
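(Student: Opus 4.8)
The plan is to prove the two inclusions $NH \subseteq G$ and $G \subseteq NH$ separately. The first is immediate from the definitions: both $N = \langle \sigma_0\sigma_1, \sigma_1\sigma_0\rangle$ and $H = \langle \sigma_0\rangle$ are subgroups of $G = \langle \sigma_0,\sigma_1\rangle$, so every product $nh$ with $n\in N$ and $h\in H$ already lies in $G$, whence $NH \subseteq G$.

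For the reverse inclusion, the key point is to invoke Lemma \ref{lemma:normal}, which tells us $N \lhd G$; consequently $NH$ is a \emph{subgroup} of $G$, and moreover $NH = HN$. Since $G$ is generated by $\sigma_0$ and $\sigma_1$, it then suffices to verify that each of these two generators lies in $NH$. The element $\sigma_0$ lies in $H \subseteq NH$ trivially. For $\sigma_1$, I would write $\sigma_1 = \sigma_0^{-1}(\sigma_0\sigma_1)$, where $\sigma_0^{-1}\in H$ and $\sigma_0\sigma_1\in N$, so that $\sigma_1 \in HN = NH$. Hence $\{\sigma_0,\sigma_1\}\subseteq NH$, and since $NH$ is a subgroup of $G$ containing a generating set of $G$, we conclude $G \subseteq NH$. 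Combining the two inclusions yields $NH = G$.

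There is no real obstacle in this argument; the only step requiring care is the appeal to the normality of $N$ (already established in Lemma \ref{lemma:normal}), which is what guarantees that $NH$ is a subgroup and that $NH = HN$, so that exhibiting $\sigma_1$ as an element of $HN$ is enough. Once this lemma is in hand, Lemmas \ref{lemma:normal}, \ref{lemma:intersection}, and this one together give $G = N \rtimes H$, the first assertion of the Theorem, and the earlier order computation identifies the isomorphism type of $N$ for the second assertion.
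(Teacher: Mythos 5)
Your proof is correct and follows essentially the same route as the paper: both arguments reduce to checking that the generators $\sigma_0$ and $\sigma_1$ lie in $NH$. The only cosmetic difference is that the paper exhibits $\sigma_1$ directly as $(\sigma_1\sigma_0)\sigma_0^{-1}\in NH$, while you write it as $\sigma_0^{-1}(\sigma_0\sigma_1)\in HN$ and invoke normality of $N$ to get $HN=NH$; your version is in fact slightly more careful, since it makes explicit that $NH$ is a subgroup, which is what licenses the final step in either argument.
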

\begin{proof}
Recall that $N = \langle \sigma_0\sigma_1 , \sigma_1\sigma_0 \rangle$, $H = \langle \sigma_0 \rangle$, and $G = \langle \sigma_0, \sigma_1 \rangle$. To show that $\sigma_0 \in NH$, choose $n = id \in N$ and $h = \sigma_0 \in H$. Then $nh = \sigma_0 \in NH$. To show that $\sigma_1 \in NH$, choose $n = \sigma_1\sigma_0 \in N$ and $h = \sigma_0^{-1} \in H$. Then $nh = \sigma_1 \in NH$. The generators of $G$ are in $NH$, so $NH = G$.
\end{proof}
\vspace{0.2in}

\begin{proof}[\textbf{Proof of Theorem 1}]\ 
\\
\\
The group $G$ is a semi direct product of subgroups $N$ and $H$ if and only if the three conditions are true:
\begin{center}
\begin{enumerate}[I.]
    \item $N \lhd G$
    \item $N \cap H = \{id\}$
    \item $NH = G$
\end{enumerate}
\end{center}
Conditions $I$, $II$, and $III$ are satisfied by Lemmas \ref{lemma:normal}, \ref{lemma:intersection}, and \ref{lemma:product} respectively. Therefore, $G$ is a semi direct product of subgroups $N$ and $H$.
\end{proof}

\section*{Future Directions}
In future research, we plan on investigating the monodromy groups of dessins d'enfant associated to rational billiards surfaces created by polygons with more than three sides.

\section*{Acknowledgements}
The first author would like to thank the Appalachian College Association for funding her research on this project through a Ledford Scholarship. The fourth author would like to thank the McNair Program at Lee University for funding his work on this project.

\bibliographystyle{plain}
\bibliography{main}

\end{document}